\theoremstyle{plain} \newtheorem{theorem}{Theorem}[section]
\theoremstyle{plain} 
\theoremstyle{plain} 
\theoremstyle{plain}\newtheorem{lemma}[theorem]{Lemma}
\theoremstyle{definition} 
\theoremstyle{definition}
\theoremstyle{remark}\newtheorem{remark}[theorem]{Remark}
\theoremstyle{definition}\newtheorem{hypothesis}[theorem]{Hypothesis}
\theoremstyle{remark}
\theoremstyle{definition}
\theoremstyle{definition}
\newcommand{\C}{{\mathbb{C}}}
\newcommand{\R}{{\mathbb{R}}}
\newcommand{\I}{{\mathscr{I}_{\ell}}}
\numberwithin{equation}{section}
\title[Moduli Stack of invariant complex structures on $G$]{A geometric compactification of the moduli stack of left invariant complex structures on a Lie group}
\author{Laurent Meersseman}
\date{\today}
\subjclass{32G05, 
	53C15, 
	58D27, 
	14D23
}
\address{Laurent Meersseman\\
	Univ Angers, CNRS, LAREMA, SFR MATHSTIC\\
	F-49000 Angers, France\\ laurent.meersseman@univ-angers.fr}
\thanks{The author benefits from the support of the French government “Investissements d’Avenir” program integrated to France 2030, bearing the following reference ANR-11-LABX-0020-01.}
\begin{document}
	\begin{abstract}
		We describe a geometric compactification of the moduli stack of left invariant complex structures on a fixed real Lie group or a fixed quotient. The extra points are CR structures transverse to a real foliation. 
	\end{abstract}
	
	\maketitle

	\section{Introduction}
	\label{intro}

Left-invariant complex structures on Lie groups and their quotients form a very specific but also very rich class of (mostly) non-Kähler compact complex manifolds, as shown for example by the following two facts:
\begin{enumerate}[---]
	\item Such structures exist on {\slshape any} compact real Lie group of even dimension \cite{Samelson}.
	\item Small deformations of complex parallelizable nilmanifolds, that is quotients of a complex nilpotent Lie group by a cocompact lattice acting on the right, are in general no longer parallelizable but are still given by left invariant structures on the Lie group \cite{Rollenske1}\footnote{ Observe however that small deformations of left invariant structures are in general not left invariant, cf. Hopf surfaces or the beautiful deformations of \cite{Ghys}.}. Their Kuranishi space can be very singular and also not reduced \cite{Rollenske2}. 
\end{enumerate}
In this short note, we describe a geometric compactification of the moduli stack of left invariant complex structures on a fixed real Lie group or a fixed quotient. The starting point is the following easy observation detailed in \S \ref{secgrassmann}.
Left-invariance implies that such complex structures are completely determined by their value at a single point and can be described by an endomorphism of the Lie algebra. Looking at the eigenspaces of this endomorphism over $\C$ allows to identify the set of left invariant structures $\I$ with an open set of a projective variety $\mathbb{V}$ living in the grassmannian of half-dimensional planes of the complexified Lie algebra. Hence $\mathbb{V}$ is a natural compactification of $\I$.

The problem is now to understand which objects correspond to the extra points and which type of moduli stack is associated to them.

We describe geometrically in \S \ref{sectransverseCRfoliations} points in $\mathbb{V}$ that are not complex structures as real foliations with a transverse CR structure or equivalently as holomorphic foliations on the complexified Lie group. We also derive a notion of family of such structures giving rise to an associated moduli stack. We obtain in \S \ref{secmodulistack} a corresponding compactification of the moduli stack of left invariant structures under some mild hypothesis. Examples are treated in \S \ref{secexamples}.

The main interesting point of this construction is that the compactification is not obtained by considering flat families instead of smooth ones, i.e. not obtained by adding singular geometric objects to the moduli stack, but rather by considering families of $C^\infty$ manifolds endowed with more general geometric structures.

It is also worth noticing that in this toy example, there is a nice dictionary between complex structures on the group and foliations on a complexification of the group, with Newlander-Nirenberg Theorem responding to Frobenius Theorem. Unfortunately, this seems to be one of the sole class of examples where such an effective dictionary can be established.

I would like to thank Ernesto Lupercio for pointing out the relation with \cite{KE}, cf . Remark \ref{rkKontsevich}, opening the way to future developments.

\section{A natural compactification of the set of left invariant complex structures}
\label{secgrassmann}
Let $G$ be a connected real Lie group of dimension $2n$. A {\slshape left invariant almost-complex structure} on $G$ is an almost-complex operator $J$ on the tangent bundle $TG$ that is invariant by pull-back by any left translation. Equivalently $J$ maps left invariant vector fields to left invariant vector fields hence induces a linear endomorphism of the Lie algebra $\mathfrak{G}$ of $G$. Left-invariant almost-complex structures on $G$ are in $1:1$-correspondence with linear endomorphisms of $\mathfrak{G}$ whose square is $-Id$. Recall that we denote by $\I$ the set of left invariant complex structures on $G$. It is endowed with the topology of $C^\infty$ convergence of operators $J$ on compact subsets of $G$.

We denote by the same letter $J$ the operator on $TG$ and the corresponding endomorphism of $\mathfrak{G}$.

\begin{remark}
	\label{rkoriented}
	We do not ask our structures to be compatible with some fixed orientation on $G$ or equivalently on $\mathfrak{G}$ as it is usually the case in moduli theory. The reason for this non standard choice is that we need to consider the space of {\slshape all} complex strutures to have a complex projective compactification. This will become clear in \S \ref{subsectori}.
\end{remark} 

The endomorphism $J$ is diagonalizable over $\C$ with eigenvalues $+i$ and $-i$. This gives a decomposition $\mathfrak{G}\otimes_\R \C=T^{1,0}\oplus T^{0,1}$. Extend linearly the Lie bracket to $\mathfrak{G}_\C:=\mathfrak{G}\otimes_\R \C$. Since the Lie bracket on $\mathfrak{G}$ corresponds to the Lie bracket on left invariant vector fields, a left invariant structure $J$ is integrable, i.e. defines a structure of complex manifold on $G$ through Newlander-Nirenberg Theorem if and only the subspace $T^{0,1}$ is involutive, i.e. $[T^{0,1},T^{0,1}]\subset T^{0,1}$. Note that the integrable left invariant almost-complex structures are exactly the complex structures on $G$ such that all left translations are automorphisms.

As a consequence, let $\text{Gr}_{n}(\mathfrak{G}_\C)$ be the grassmaniann of $n$-planes of the complex $2n$-dimensional vector space $\mathfrak{G}_\C$. Define
\begin{equation}
	\label{V}
	\mathbb{V}:=\{T\in\text{Gr}_{n}(\mathfrak{G}_ \C)\mid [T,T]\subset T\}
\end{equation}
Then, we have
\begin{lemma}
	\label{lemmagrass}
	The set $\mathbb{V}$ is a subvariety of $\text{\rm Gr}_{n}(\mathfrak{G}_\C)$ and the mapping
	\begin{equation}
		\label{Imapping}
		J\in\I\longmapsto T^{0,1}\in \text{\rm Gr}_{n}(\mathfrak{G}_ \C)
	\end{equation}
	identifies the space $\I$ with the open subset
	\begin{equation}
		\label{I}
		\{T\in\mathbb{V}\mid T\cap \bar T=\{0\}\}
	\end{equation}
	of the projective variety $\mathbb{V}$.
\end{lemma}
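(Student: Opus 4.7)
The plan is to establish the two claims separately: first the algebraic subvariety structure of $\mathbb{V}$, then the identification of $\I$ with the claimed open subset. To show that $\mathbb{V}$ is a subvariety, I would work in a standard affine chart of $\text{Gr}_n(\mathfrak{G}_\C)$, where an $n$-plane $T$ is represented as the span of vectors $v_1(U),\ldots,v_n(U)$ depending affinely on a local parameter $U$. Since the Lie bracket on $\mathfrak{G}$ extends $\C$-bilinearly to a polynomial map $\mathfrak{G}_\C\times\mathfrak{G}_\C\to\mathfrak{G}_\C$, the condition $[T,T]\subset T$ amounts to the vanishing of
\[
[v_i(U),v_j(U)]\wedge v_1(U)\wedge\cdots\wedge v_n(U)\in\bigwedge^{n+1}\mathfrak{G}_\C
\]
for all $i<j$, a finite system of polynomial equations in $U$. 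This is Zariski-closed in each chart, and the condition is coordinate-free, so $\mathbb{V}$ is a closed subvariety of the Grassmannian (equivalently, one may translate the equations into Plücker coordinates).

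For the identification I would proceed in three steps. \emph{(i) Well-definedness.} For $J\in\I$, the $\C$-linear extension of $J$ to $\mathfrak{G}_\C$ squares to $-\text{Id}$ and commutes with complex conjugation, so $\mathfrak{G}_\C=T^{1,0}\oplus T^{0,1}$ with $\overline{T^{0,1}}=T^{1,0}$ and both summands of dimension $n$; Newlander--Nirenberg identifies integrability of $J$ with the condition $[T^{0,1},T^{0,1}]\subset T^{0,1}$. Hence $T^{0,1}\in\mathbb{V}$ and $T^{0,1}\cap\overline{T^{0,1}}=\{0\}$. \emph{(ii) Inverse.} Given $T\in\mathbb{V}$ with $T\cap\bar T=\{0\}$, the splitting $\mathfrak{G}_\C=T\oplus\bar T$ defines $J_\C$ by $J_\C|_T=-i\,\text{Id}$ and $J_\C|_{\bar T}=+i\,\text{Id}$. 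Complex conjugation swaps $T$ and $\bar T$ compatibly with $J_\C$ (for $v\in T$, $\overline{J_\C v}=i\bar v=J_\C\bar v$), so $J_\C$ descends to a real operator $J$ on $\mathfrak{G}$ with $J^2=-\text{Id}$, and involutivity of $T$ yields integrability. \emph{(iii) Openness.} The condition $T\cap\bar T=\{0\}$ is equivalent to the $\C$-linear map $T\oplus\bar T\to\mathfrak{G}_\C$ being an isomorphism, i.e.\ the non-vanishing of a determinant, hence an open condition on $T\in\mathbb{V}$.

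The only delicate point I anticipate, beyond the algebraic check above, is verifying that the identification is a homeomorphism with respect to the stated $C^\infty$ topology on $\I$ and the subspace topology from the Grassmannian. Both directions reduce to continuous dependence of spectral projectors on a matrix whose $\pm i$-eigenspaces remain of fixed dimension and disjoint, together with continuous dependence of the projectors onto $T$ and $\bar T$ as $T$ varies in the open set \eqref{I}. The core content of the lemma, once these verifications are in place, is the translation of Newlander--Nirenberg integrability into the Frobenius-type bracket condition defining $\mathbb{V}$.
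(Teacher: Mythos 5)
Your proof is correct, and the second half (well-definedness, construction of the inverse $J$ as multiplication by $\mp i$ on $T$ and $\bar T$, openness of $T\cap\bar T=\{0\}$) is essentially the paper's own argument, just spelled out in more detail; your extra remarks on continuity in both directions address a point the paper passes over silently. Where you genuinely diverge is in proving that $\mathbb{V}$ is a subvariety. The paper works globally on the affine cone $\mathbb{A}\subset\mathscr{P}\subset\Lambda^n(\mathfrak{G}_\C)$ of involutive pure $n$-vectors, using the Nijenhuis--Schouten bracket and characterizing involutivity as a collinearity (hence analytic) condition on the pair $([T,T],T)$, so that $\mathbb{V}$ appears as the projectivization of an affine cone inside the Plücker-embedded Grassmannian. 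You instead work chart by chart: writing $T$ as the span of a basis $v_1(U),\dots,v_n(U)$ depending affinely on the chart parameter, the condition $[T,T]\subset T$ becomes the vanishing of the finitely many polynomials $[v_i(U),v_j(U)]\wedge v_1(U)\wedge\cdots\wedge v_n(U)$, which is Zariski-closed in each chart and hence globally. Your route is more elementary and entirely avoids the Schouten bracket formalism (and the attendant bookkeeping of which exterior power $[T,T]$ lives in); the paper's route is more intrinsic and exhibits $\mathbb{V}$ directly as a projective cone in Plücker coordinates, which is conceptually aligned with how it is used later as a compactification. Both arguments are complete and yield the same conclusion.
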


\begin{proof}
	Consider the vector space $\Lambda^n(\mathfrak{G}_\C)$ of $n$-vectors of $\mathfrak{G}_\C$ and endow it with the Niejenhuis-Schouten bracket extending the Lie bracket on $\mathfrak{G}_\C$. Let $\mathscr{P}\subset \Lambda^n(\mathfrak{G}_\C)$ be the affine cone of pure $n$-vectors. The map
	\begin{equation*}
		T\in\mathscr{P}\longmapsto ([T,T],T)\in \Lambda^n(\mathfrak{G}_\C)\times\Lambda^n(\mathfrak{G}_\C)
	\end{equation*}
	sends the cone $\mathbb{A}$ of involutive pure $n$-vectors to the analytic subspace of couples of collinear vectors in $\Lambda^n(\mathfrak{G}_\C)\times\Lambda^n(\mathfrak{G}_\C)$. Hence $\mathbb{A}$ is an affine cone in  $\mathscr{P}\subset \Lambda^n(\mathfrak{G}_\C)$. Therefore the projectivization of $\mathbb{A}\setminus\{0\}$, which is nothing else than $\mathbb{V}$, is a subvariety of the projectivization of $\mathscr{P}\setminus\{0\}$, that is of $\text{Gr}_{n}(\mathfrak{G}_\C)$.
	
	We already argued that integrability forces the image of \eqref{Imapping} to land in $\mathbb{V}$. Now if $T=T^{0,1}$ for some complex structure, then $\bar{T}=T^{1,0}$ and these two subspaces must be in direct sum. The inverse of \eqref{Imapping} is the operator $J$ given by multiplication by $-i$ on $T$ and by multiplication by $+i$ on $\bar T$. Finally, the condition $T\cap \bar T=\{0\}$ is open in $\text{\rm Gr}_{n}(\mathfrak{G}_ \C)$ and thus in $\mathbb{V}$.
\end{proof}

\section{Transversely CR real foliations}
\label{sectransverseCRfoliations}
Given $X$ a $C^\infty$ manifold, a real foliation with transverse structure is a $C^\infty$ foliation with a geometric structure (e.g. complex structure, riemanniann, Kähler structure) on its normal bundle that is invariant by holonomy. In particular, if the leaf space of the foliation is a bona fide manifold, then the geometric structure descends as a geometric structure on the leaf space. In this paper, we are concerned with {\slshape transversely CR real foliations}, hence the normal bundle is endowed with a CR structure invariant by holonomy.

\begin{lemma}
	\label{lemmaCR}
	Let $T\in\mathbb{V}\setminus\I$. Then $T$ defines on $G$ a left invariant transversely CR real foliation of dimension $\dim (T\cap\bar T)$.
\end{lemma}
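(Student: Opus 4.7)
The plan is to unpack the data of $T$ as follows. Set $D:=T\cap\bar T$. Since $\bar D=D$, this complex subspace is the complexification of a real subspace $D_\R:=D\cap\mathfrak{G}$ of real dimension $d:=\dim_\C(T\cap\bar T)$. I would first check that $D_\R$ is a Lie subalgebra: the extended bracket on $\mathfrak{G}_\C$ is the $\C$-linear extension of the real bracket and so commutes with complex conjugation, hence $\bar T$ is involutive along with $T$, their intersection $D$ is involutive too, and restricting to real elements yields $[D_\R,D_\R]\subset D_\R$.

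Left-translating $D_\R$ produces a left invariant involutive distribution on $G$ of rank $d$; by Frobenius it integrates to a left invariant foliation $\mathcal{F}$ of dimension $d$. Similarly, left-translating $T$ yields a left invariant complex subbundle $\mathscr{L}\subset TG\otimes_\R\C$ of complex rank $n$. One needs to check that $[T,T]\subset T$ translates to involutivity of $\mathscr{L}$ as a distribution; this is done by expressing sections of $\mathscr{L}$ in a left invariant frame and observing that the additional terms produced by the Leibniz rule still lie in $\mathscr{L}$. Writing $T_\C\mathcal{F}$ for the complexified tangent bundle to $\mathcal{F}$, one has $T_\C\mathcal{F}\subset\mathscr{L}$ and the pointwise equality $\mathscr{L}\cap\bar{\mathscr{L}}=T_\C\mathcal{F}$.

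I would then define the transverse CR structure to be the quotient complex subbundle $\mathscr{L}/T_\C\mathcal{F}$ of the complexified normal bundle $N_\C\mathcal{F}$, of complex rank $n-d$. The intersection property gives $(\mathscr{L}/T_\C\mathcal{F})\cap\overline{(\mathscr{L}/T_\C\mathcal{F})}=0$, and involutivity of $\mathscr{L}$ descends to involutivity of the quotient; these are precisely the two defining conditions for a CR structure on $N\mathcal{F}$.

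The main obstacle is the remaining requirement, holonomy invariance. I would obtain it from the Bott partial connection on $N\mathcal{F}$, defined along leaves by $\nabla_X Y=[X,\tilde Y]\bmod T\mathcal{F}$, with $\tilde Y$ any lift of $Y$. The subbundle $\mathscr{L}/T_\C\mathcal{F}$ is parallel for this connection since $[T_\C\mathcal{F},\mathscr{L}]\subset[\mathscr{L},\mathscr{L}]\subset\mathscr{L}$, and as holonomy transport is exactly integration of the Bott connection along leaf paths, the CR structure is preserved by holonomy. Left invariance of both the foliation and its transverse CR structure is automatic, every ingredient having been produced by left translation of linear data at the identity.
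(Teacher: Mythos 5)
Your proof is correct and follows the same construction as the paper: realize $T\cap\bar T$ as the complexification of a real subalgebra of $\mathfrak{G}$, integrate it by Frobenius into a left invariant foliation, and define the transverse CR structure on the normal bundle via the quotient $E=T/(T\cap\bar T)$, which satisfies $E\cap\bar E=0$ and inherits involutivity from $T$. The one step where you genuinely diverge is holonomy invariance: the paper exploits homogeneity and simply observes that the holonomy morphisms of this left invariant foliation are realized by left translations, which preserve $E$ and $E_\R$ by construction; you instead invoke the Bott partial connection and check that $\mathscr{L}/T_\C\mathcal{F}$ is parallel along the leaves because $[T_\C\mathcal{F},\mathscr{L}]\subset[\mathscr{L},\mathscr{L}]\subset\mathscr{L}$, then identify linearized holonomy with Bott parallel transport. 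Both arguments are valid; the paper's is shorter but relies on the homogeneous structure of $G$, while yours is intrinsic and would carry over verbatim to a non-homogeneous setting. Your explicit verification that pointwise involutivity of $T$ propagates to involutivity of the left invariant distribution (via the Leibniz rule in a left invariant frame) is a detail the paper leaves implicit.
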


\begin{proof}
	Let $T\in\mathbb{V}\setminus\I$. Then $T\cap\bar T$ is a positive-dimensional complex subspace of $\mathfrak{G}_\C$ invariant by conjugation, hence is the complexification of some real subspace $T\mathscr{F}$ of $\mathfrak{G}$. Moreover,  $T\cap\bar T$ and thus $T\mathscr{F}$ are preserved by Lie bracket. By Frobenius Theorem, the corresponding left invariant distribution, that we still denote by $T\mathscr{F}$ is tangent to a real foliation $\mathscr{F}$ of $G$. It is itself left invariant, that is left translations preserve the leaves. Setting $E:=T/(T\cap\bar T)$, we see that $E\cap\bar E$ is reduced to zero. Then $E\oplus \bar E$ is a complex subspace of the complexification of the normal bundle to the foliation $N\mathscr{F}:=TG/T\mathscr{F}$ that is invariant by complex conjugation. Hence it is the complexification of the real subspace
	\begin{equation}
		\label{E}
		E_\R:=\{v+\bar v\mid v\in E\}
	\end{equation}
	of $N\mathscr{F}$. To sum up, we have a decomposition
	\begin{equation}
		\label{CRnormal}
		E_\R\otimes_\R \C=E\oplus \bar E\subset N\mathscr{F}\otimes_\R \C
	\end{equation}
	But this is exactly the definition of an almost-CR structure on $N\mathscr{F}$. We derive from the involutivity of $T$ that $E$ is involutive as well, so this almost-CR structure is integrable, i.e. is a CR structure.
	
	Finally, holonomy morphisms of $\mathscr{F}$ are left translations. Since $E_\R$ and $E$ are left invariant from the construction, this CR structure of $N\mathscr{F}$ is preserved by holonomy and we are done. 
\end{proof}

\begin{remark}
	\label{rktCR}
	The inclusion in \eqref{CRnormal} is always strict. Hence the foliation $\mathscr{F}$ is not transversely holomorphic. Still, transversely CR foliations are close to  transversely holomorphic foliations and to the Polarized CR structures of \cite{Polarized}. 
\end{remark}

There is an alternative geometric interpretation of elements of $\mathbb{V}\setminus \I$. Let $G_\C$ be the connected and simply-connected complex Lie group associated to the Lie algebra $\mathfrak{G}_\C$.

\begin{lemma}
	\label{lemmaHF}
	Let $T\in\mathbb{V}\setminus\I$. Then $T$ defines on $G_\C$ a left invariant holomorphic foliation of complex dimension $\dim T$.
\end{lemma}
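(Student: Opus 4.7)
The plan is to translate $T$ across $G_\C$ by left translations and then invoke the holomorphic Frobenius theorem.

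First, recall that $\mathfrak{G}_\C$ is by construction the Lie algebra of $G_\C$, i.e. the complex vector space of left invariant holomorphic vector fields on $G_\C$, and the Lie bracket on $\mathfrak{G}_\C$ coincides with the bracket of such vector fields (this is the standard identification used to define $G_\C$ as the connected, simply-connected complex Lie group integrating $\mathfrak{G}_\C$). In particular, the complex-linear extension of the bracket on $\mathfrak{G}$ to $\mathfrak{G}_\C$ that appears in the definition of $\mathbb{V}$ is precisely the bracket on $\mathfrak{G}_\C$ viewed as $\operatorname{Lie}(G_\C)$.

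Next, given $T\in\mathbb{V}$, I would push it around by left translations: for every $g\in G_\C$, set
\begin{equation*}
    \mathcal{T}_g := (dL_g)_e(T) \subset T_g G_\C.
\end{equation*}
Because left translations $L_g\colon G_\C\to G_\C$ are biholomorphisms, their differentials at $e$ are $\C$-linear isomorphisms onto $T_g G_\C$, and they depend holomorphically on $g$. Thus $\mathcal{T}$ is a holomorphic subbundle of the holomorphic tangent bundle of $G_\C$, of complex rank $\dim_\C T$, and it is left invariant by construction.

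The remaining point is involutivity in the holomorphic sense. Left invariant holomorphic vector fields trivialize $TG_\C$, so a holomorphic frame of $\mathcal{T}$ is obtained by left-translating a $\C$-basis $(v_1,\ldots,v_k)$ of $T$; the Lie bracket of two such frame fields is again left invariant and corresponds at $e$ to $[v_i,v_j]\in\mathfrak{G}_\C$. The assumption $T\in\mathbb{V}$, i.e. $[T,T]\subset T$, then tells us that these brackets stay in $\mathcal{T}$. Hence $\mathcal{T}$ is an involutive holomorphic distribution on $G_\C$. The holomorphic version of the Frobenius theorem now yields a holomorphic foliation of $G_\C$ whose tangent bundle is $\mathcal{T}$, of complex dimension $\dim_\C T$, and it is left invariant because $\mathcal{T}$ is.

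I do not expect a serious obstacle here: the only point requiring a small amount of care is to check that the bracket of left invariant holomorphic vector fields computes the algebraic bracket on $\mathfrak{G}_\C$ that defines $\mathbb{V}$ (as opposed, say, to some real bracket on $\mathfrak{G}$ linearly extended afterwards), but this is precisely the way $G_\C$ is constructed from $\mathfrak{G}_\C$. Note also that the hypothesis $T\notin\I$ is not needed for the statement; it simply distinguishes this case from the situation in which the foliation on $G_\C$ comes from a genuine complex structure on $G$.
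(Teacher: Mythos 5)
Your proposal is correct and follows essentially the same route as the paper: the paper's proof is a one-line version of exactly this argument (the involutive subspace $T$ of $\mathfrak{G}_\C$ corresponds to a left invariant involutive holomorphic distribution on $G_\C$, hence is tangent to a holomorphic foliation by the holomorphic Frobenius theorem). Your additional remarks — that the bracket on $\mathfrak{G}_\C$ is the one of $\operatorname{Lie}(G_\C)$ and that the hypothesis $T\notin\I$ is not actually used — are both accurate.
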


\begin{proof}
	The involutive subspace $T$ of $\mathfrak{G}_\C$ corresponds to a left invariant involutive holomorphic distribution of $G_\C$, which is thus tangent to a holomorphic foliation of dimension $T$.
\end{proof}
\section{Compactification of the Moduli Stack}
\label{secmodulistack}

The moduli stack of left invariant complex structures on $G$ is the category fibered in groupoids over the analytic site $\mathfrak{A}$ whose objects are families over a base $B\in\mathfrak{A}$ given by smooth and proper morphisms with all fibers isomorphic to $G$ endowed with some left invariant complex structure and whose morphisms over $B\to B'$ are morphisms of a family over $B$ to a family over $B'$. It is thus a subcategory of the moduli stack of complex structures on $G$ as defined in \cite{LMStacks}. We denote it by $\mathscr{M}_\ell(G)$. More generally, we define in the same way the stack $\mathscr{M}_\ell(M)$ where $M=G/\Gamma$ for some subgroup $\Gamma$ of $G$ acting on the right on $G$ hence commuting with left translations. Any left invariant complex structure on $G$ descends thus as a left invariant complex structure on the $C^\infty$ homogeneous space $M$ but the corresponding moduli stacks are in general different since the collection of isomorphisms that may exist between distinct structures depends in general on $\Gamma$, see \S \ref{secexamples}.

There exists a tautological family of left invariant structures over $\I$. Just consider on the product $G\times\I$ the complex structure on $G\times\{T\}$ given by $T$ as explained in Lemma \ref{lemmagrass}. We denote it by $\mathscr{U}_\ell\to\I$. This family defines a mapping $\I\to\mathscr{M}_\ell(M)$ by Yoneda's Lemma. This mapping is smooth and surjective. In fact, the fiber product $\I\times_{\kern-1pt \mathscr{M}_\ell(M)}\I$ describes the isomorphisms between left invariant complex structures 
\begin{equation}
	\label{iso}
	\begin{tikzcd}
		(M,T')\arrow[r,"f"',"\simeq"]&(M,T)
	\end{tikzcd}
\end{equation}
and thus, noting that \eqref{iso} forces $T'$ to be equal to $df^{-1}(T)$, is equal to the analytic space
\begin{equation}
	\label{fiberproduct}
	\I\times_{\kern-1pt\mathscr{M}_\ell(M)}\I=\{(T,f)\in\text{Diff}(M,\mathscr{U}_\ell)\mid df^{-1}(T)\in\I\}
\end{equation}
where the set $\text{Diff}(M,\mathscr{U}_\ell)$ is the set of diffeomorphisms from $M$ to a fiber of the family $\mathscr{U}_\ell\to\I$, see \cite{Douady}. Now, the projection of \eqref{fiberproduct} to $\I$ is smooth and surjective, cf. \cite{LMStacks}. Hence, $\I\to \mathscr{M}_\ell(M)$ is an atlas for $\mathscr{M}_\ell(M)$ which is thus an analytic stack.

\begin{remark}
	\label{rkfamilies}
	This abstract construction expresses in a more detailed and categorical way the following facts. Let $B$ be an analytic space. A family of left invariant structures on $M$ above $B$ is given by gluing over some open cover $(B_\alpha)$ of $B$ pull-backs of $\mathscr{U}_\ell\to \I$ by holomorphic maps $f_\alpha:B_\alpha\to\I$. Gluings are done following a holomorphic $1$-cocycle $(f_{\alpha\beta})$ with values in \eqref{fiberproduct}. Morphisms between families can also be described over some open cover of their bases by some compatible collection of holomorphic maps with values in \eqref{fiberproduct}. As a consequence, the category $\mathscr{M}_\ell(M)$ can be completely recovered from the date of $\mathscr{U}_\ell\to\I$ and \eqref{fiberproduct}.
\end{remark}

But we may easily take a step further and enlarge our notion of family. Define a family $\mathscr{X}\to B$ as a locally trivial $C^\infty$ bundle over $B$ with fiber $M$ plus a $n$-dimensional left invariant involutive subundle $E$ of $T_F\mathscr{X}\otimes_\R\C$, the complexified tangent bundle to the fibers of $\mathscr{X}\to B$. Let $B^*\subset B$ be the points $x\in B$ above which the fiber $E_x$ of $E$ satisfies $E_x\cap\bar E_x=\{0\}$. Over this possibly empty open set, the restriction $\mathscr{X}^*\to B^*$ of $\mathscr{X}$ is a family of left invariant complex structures on $M$ as before, that is an object of $\mathscr{M}_\ell(M)$. Define a morphism between two such families $\mathscr{X}\to B$ and $\mathscr{X}'\to B'$ as a bundle map
\begin{equation*}
	\begin{tikzcd}
		\mathscr{X} \arrow[d]\arrow[r,"F"]&\mathscr{X}'\arrow[d]\\
		B\arrow[r,"f"']&B'
	\end{tikzcd}
\end{equation*}
with $f$ holomorphic and the differential of $F$ inducing a bundle morphism $T_F\mathscr{X}\otimes_\R\C\to T_F\mathscr{X}'\otimes_\R\C$ that sends $E$ to $E'$. The restriction of such a morphism to $\mathscr{X}^*\to B^*$ is a morphism of $\mathscr{M}_\ell(M)$.

Our set of morphisms is however too big, for the automorphism group of a transversely CR real foliation may be infinite-dimensional even if $G$ is compact, cf. \S \ref{subsectori}. So we restrict to the morphisms as above that moreover commute with left translations. We denote by $\bar{\mathscr{M}}_\ell(M)$ the stack over $\mathfrak{A}$ formed by the previously defined generalized notions of families and of morphisms of families (in the restricted sense).

In general, there is no inclusion of $\mathscr{M}_\ell(M)$ into $\bar{\mathscr{M}}_\ell(M)$, because it is not clear that holomorphic maps commute with left translations. To overcome this problem, we introduce the condition
\begin{hypothesis}
	\label{condition}
	Let $J$ and $J'$ be two left invariant complex structures on $M$. Let $f$ be a biholomorphism between $(M,J)$ and $(M,J')$. Then $f$ commutes with left translations on $M$.
\end{hypothesis}

Under hypothesis \ref{condition}, we have a natural inclusion of $\mathscr{M}_\ell(M)$ into $\bar{\mathscr{M}}_\ell(M)$. Moreover, the tautological family $\mathscr{U}_\ell\to \I$ extends to a family $\mathscr{U}\to\mathbb{V}$ with $\mathscr{U}^*\simeq\mathscr{U}_\ell$ and $\mathbb{V}^*=\I$. 

We note that every biholomorphism between compact parallelizable manifolds satisfies this additional condition. Indeed, let $f$ be a biholomorphism between $M$ and $M'$ two compact parallelizable manifolds. Since the tangent bundles of $M$ and $M'$ are holomorphically trivialized by left invariant vector fields, $f$ sends a left invariant vector field of $M$ to a linear combination of left invariant vector fields of $M'$. By compacity, the coefficients are constant. So $f$ commutes with left translations. This is still true in many non parallelizable examples, cf. \S \ref{subsecHopf}. 

The stack $\bar{\mathscr{M}}_\ell(M)$ is not an analytic stack since the automorphism group of a transversely CR real foliation is not in general a {\slshape complex} Lie group, see \S \ref{subsectori}. However, the family $\mathscr{U}\to\mathbb{V}$ defines a surjective mapping $\mathbb{V}\to\bar{\mathscr{M}}_\ell(M)$, which is smooth in following CR sense. The corresponding fiber product 
\begin{equation}
	\label{fiberproduct2}
	\mathbb{V}\times_{\kern-1pt\bar{\mathscr{M}}_\ell(M)}\mathbb{V}=\{(T,f)\in\text{Diff}_\ell(M,\mathscr{U})\mid df^{-1}(T)\in\mathbb{V}\}
\end{equation}
where the subscript $\ell$ in $\text{Diff}_\ell(M,\mathscr{U})$ means that we restrict to diffeomorphisms commuting with left translations, can be endowed with a CR structure which satisfies above any sufficiently small open set $U$ of $\mathbb{V}$ the following diagram
\begin{equation*}
	\begin{tikzcd}
		p^{-1}(U) \arrow[dr, "p"']\arrow[rr,"\simeq"]&&U\times V\arrow[dl,"proj."]\\
		&U
	\end{tikzcd}
\end{equation*}
where $p$ is the projection map from \eqref{fiberproduct2} to $\mathbb{V}$, the set $V$ is an open set of some euclidean space $\R^N$ and we endow $p^{-1}(U)$ with the Levi flat CR srtucture that makes of the isomorphism on the top a CR isomorphism. This induced CR structure does not correspond to the natural complex structure of \eqref{fiberproduct} when restricted to $\I$. It purely comes from the complex structure of $\mathbb{V}$ that is pulled-back through $p$. 

Since $\mathbb{V}$ is compact, $\bar{\mathscr{M}}_\ell(M)$ is compact and we may state

\begin{theorem}
	\label{thmcompactification}
	Assume Hypothesis \ref{condition}. Then, the natural inclusion map $\mathscr{M}_\ell(M)\hookrightarrow\bar{\mathscr{M}}_\ell(M)$ exhibits $\bar{\mathscr{M}}_\ell(M)$ as a compactification of $\mathscr{M}_\ell(M)$.
\end{theorem}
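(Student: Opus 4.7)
The plan is to break the statement into three pieces: (i) check that the inclusion functor $\mathscr{M}_\ell(M)\hookrightarrow\bar{\mathscr{M}}_\ell(M)$ is well defined and fully faithful; (ii) show $\mathscr{M}_\ell(M)$ is an open substack of $\bar{\mathscr{M}}_\ell(M)$; (iii) show $\bar{\mathscr{M}}_\ell(M)$ is compact. The compactification statement then amounts to the conjunction of these three items.

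For (i), I would send a family $\mathscr{X}\to B$ of $\mathscr{M}_\ell(M)$ to itself viewed as a $C^\infty$ bundle together with the left-invariant involutive subbundle $E\subset T_F\mathscr{X}\otimes_\R\C$ given fiberwise by the $(0,1)$-tangent spaces; by Lemma \ref{lemmagrass} this $E$ satisfies $E_x\cap\bar E_x=\{0\}$ for every $x$, so $B^*=B$ and the original complex family is recovered. On morphisms, a morphism in $\mathscr{M}_\ell(M)$ is fiberwise a biholomorphism between two left invariant complex structures on $M$, and Hypothesis \ref{condition} applied fiber by fiber guarantees that it commutes with left translations, so it qualifies as a morphism in $\bar{\mathscr{M}}_\ell(M)$. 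Conversely, a morphism of $\bar{\mathscr{M}}_\ell(M)$ between two families coming from $\mathscr{M}_\ell(M)$ is a $C^\infty$ bundle map sending $E$ to $E'$, hence is fiberwise holomorphic for the induced complex structures, which shows the inclusion is fully faithful.

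For (ii), I would use the two atlases $\I\to\mathscr{M}_\ell(M)$ and $\mathbb{V}\to\bar{\mathscr{M}}_\ell(M)$ together with the commutative square
\begin{equation*}
\begin{tikzcd}
\I \arrow[r,hook]\arrow[d]&\mathbb{V}\arrow[d]\\
\mathscr{M}_\ell(M)\arrow[r,hook]&\bar{\mathscr{M}}_\ell(M)
\end{tikzcd}
\end{equation*}
and verify it is 2-Cartesian: a point $T\in\mathbb{V}$ lies over $\mathscr{M}_\ell(M)$ iff its associated fiber carries a genuine complex structure, which by Lemma \ref{lemmagrass} happens precisely when $T\cap\bar T=\{0\}$, i.e.\ $T\in\mathbb{V}^*=\I$. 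Since $\I$ is open in $\mathbb{V}$, the inclusion is open. For (iii), $\mathbb{V}$ is closed in a complex Grassmannian, hence compact, and the atlas $\mathbb{V}\to\bar{\mathscr{M}}_\ell(M)$ is surjective (Lemmas \ref{lemmagrass} and \ref{lemmaCR} cover all points of $\mathbb{V}$); compactness of $\bar{\mathscr{M}}_\ell(M)$ then follows from the fact that its coarse topological space is the quotient of the compact space $\mathbb{V}$ by the equivalence relation described in \eqref{fiberproduct2}.

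The step I expect to be the main obstacle is the fully faithfulness part of (i): one has to be sure that the ``commutes with left translations'' restriction imposed on morphisms of $\bar{\mathscr{M}}_\ell(M)$ does not kill any genuine morphism of $\mathscr{M}_\ell(M)$. Hypothesis \ref{condition} takes care of this fiberwise, but for families over a general (possibly non-reduced) analytic base $B$ one must promote this pointwise statement to a family version, by observing that the set of points where a morphism of families fails to commute with left translations is a closed analytic subset whose reduction is empty by the hypothesis, and then invoking the description of families of Remark \ref{rkfamilies} to conclude. Once that identification of morphism sets is secured, points (ii) and (iii) are largely formal consequences of the compactness of $\mathbb{V}$ and of the openness of $\I$ inside $\mathbb{V}$.
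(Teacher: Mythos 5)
Your proposal is correct and follows essentially the same route as the paper, whose ``proof'' is in fact the discussion preceding the theorem: the inclusion exists by Hypothesis \ref{condition}, openness comes from $\mathbb{V}^*=\I$ being open in $\mathbb{V}$ via the atlases $\I\to\mathscr{M}_\ell(M)$ and $\mathbb{V}\to\bar{\mathscr{M}}_\ell(M)$, and compactness of $\bar{\mathscr{M}}_\ell(M)$ follows from compactness of $\mathbb{V}$ and surjectivity of the latter atlas. Your three-step decomposition, including the attention to full faithfulness over possibly non-reduced bases, is simply a more explicit organization of the same argument.
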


\section{Examples}
\label{secexamples}
\subsection{Elliptic curves}
\label{subsectori}
Let us begin with $G=\mathbb{S}^1\times\mathbb{S}^1$. Then $\mathfrak{G}_\C$ is $\C^2$ with null bracket and $\mathbb{V}$ is the complex projective line $\mathbb{P}^1(\mathfrak{G}_\C)$. More precisely, points $[\tau : -1]$ with $\tau\in\mathbb{H}$ encode the elliptic curve $\mathbb{E}_\tau$, whereas points $[\bar\tau : -1]$ encode the elliptic curve $\mathbb{E}_{\bar\tau}$ which is isomorphic to $\mathbb{E}_\tau$ but induces on $G$ the opposite orientation to that induced by $\mathbb{E}_\tau$, cf. Remark \ref{rkoriented}. Finally points $[a:b]$ with $a$ and $b$ real correspond to $G$ endowed with a real linear foliation $\mathscr{F}_s$ of slope $s:=ba^{-1}\in\R\cup\{+\infty\}$. Observe that the normal bundle to the foliation has real dimension $1$, so the transverse CR structure is a zero-dimensional subbundle. Hence the tautological family is the universal family over $\mathbb{H}$ and over $-\mathbb{H}$, and the family of foliations $\mathscr{F}_s$ above $\R\mathbb{P}^1$. 

Every left invariant complex structure defines on $G$ a structure of a complex Lie group. Since $G$ is moreover compact, Hypothesis \ref{condition} is satisfied and Theorem \ref{thmcompactification} applies. The connected component of the identity of the automorphism group of $\mathscr{F}_s$ is the {\slshape real} Lie group of translations of $G$, whereas of course that of $\mathbb{E}_\tau$ is complex and equal to $\mathbb{E}_\tau$. Here it is crucial to stick to the definition of automorphisms of $\mathscr{F}_s$ that commute with translations otherwise we would have infinite dimensional automorphism groups. Indeed, in the case of a rational $s$, the leaf space of $\mathscr{F}_s$ is a circle and the set of diffeomorphisms of $G$ that preserve $\mathscr{F}_s$ projects surjectively on the full group of $C^\infty$ diffeomorphism of the circle that serves as leaf space. 

Going back to the left invariant structures on $G$, we may now construct a family of translation groups over $\mathbb{P}^1$ corresponding to the tautological family, i.e. the fiber over $\tau$ is the translation group of the fiber over $\tau$ of the tautological family. This family $\mathscr{T}$ of translations is the universal family of elliptic curves compactified by a family of copies of $G$ above $\R\mathbb{P}^1$. In other words, consider the product $G\times\mathbb{P}^1\to \mathbb{P}^1$ and endow its restriction to $\mathbb{H}\cup -\mathbb{H}$ with the complex structure of the universal family. This is $\mathscr{T}$. 

\begin{remark}
	\label{rkCR}
	The CR structure on $\mathscr{T}$ alluded to in \S \ref{secmodulistack} is just the trivial one given by the product $G\times\mathbb{V}$. 
\end{remark}
Finally $\text{GL}_2(\mathbb{Z})$ acts on $\mathbb{V}$, on the tautological family and on the family of translations. The stack  $\bar{\mathscr{M}}_\ell(G)$ is the quotient stack $[\mathscr{T}/\text{GL}_2(\mathbb{Z})]$ and its geometric quotient is $\mathbb{V}/\text{GL}_2(\mathbb{Z})$. The isotropy group of $\mathscr{F}_s$ is countable in the special cases $s=0$ and $s=+\infty$ so this is not an orbifold.
 
We note that $G_\C$ is equal to $\mathfrak{G}_\C$ and the left invariant foliations of Lemma \ref{lemmaHF} are just the linear foliations of $\mathfrak{G}_\C\simeq\C^2$ by parallel lines. 

\begin{remark}
	\label{rkKontsevich}
	Alternatively, we may think of $\mathscr{F}_s$ with $s$ irrational as a non-commutative torus, so this construction exhibits non-commutative tori as limits of elliptic curves, as suggested in \cite[\S 1.39]{KE}. The moduli stack we obtain, namely $[\mathscr{T}/\text{GL}_2(\mathbb{Z})]$, is compatible with this interpretation since non-commutative tori are Morita equivalent if and only if the slopes are related through an element of $\text{GL}_2(\mathbb Z)$ by \cite{Rieffel}.
\end{remark} 
  
\subsection{Higher-dimensional Tori}
\label{subsecTori}
The picture for $G=(\mathbb S^1)^{2n}$ is similar to that described in \S \ref{subsectori}. Here $\mathbb{V}$ is the complex grassmannian $\text{Gr}_n(\C^{2n})$. For $k$ between $0$ and $n$, define a $k$-point in $\text{Gr}_n(\C^{2n})$ as a complex $n$-plane of $\C^{2n}$ that intersects $\R^{2n}$ in a subspace of dimension at least $k$. Let $\mathbb{V}_k$ define the set of $k$-points. Then $\mathbb{V}_0$ is the whole $\mathbb{V}$ whereas $\mathbb{V}_n$ is the real grassmannian $\text{Gr}_n(\R^{2n})$ included in $\mathbb{V}$ through the inclusion $\R^{2n}\subset\C^{2n}$. Each Schubert cell of $\text{Gr}_n(\C^{2n})$ admits a stratification by $k$-points. Notice that points in $\mathbb{V}_0\setminus\mathbb{V}_1$ encode complex tori regardless of the orientation, cf. Remark \ref{rkoriented}. And $k$-points of $\mathbb{V}_k\setminus\mathbb{V}_{k+1}$ correspond to $G$ endowed with a real linear $k$-dimensional foliation $\mathscr{F}_s$ and a transverse CR structure tangent to a transverse complex linear $n-k$-dimensional foliation. Once again, $n$-points are just linear real $n$-foliation with no transverse CR-structure for dimension reason.

Diffeomorphisms that commute with left translations lift as affine transformations of $\R^{2n}$. Hence, from the one hand, Hypothesis \ref{condition} is satisfied and Theorem \ref{thmcompactification} applies. From the other hand, the automorphism group of the structure induced by a $k$-point is the subgroup of the affine group of $\R^{2n}$ that preserve both the real and the CR linear foliations. In particular, it is a finite-dimensional connected real Lie group. 

The construction of the compactified moduli stack of higher-dimensional tori goes along the same lines as that of elliptic curves. Start with the product $G\times \text{Gr}_n(\C^{2n})$. Endow its restriction to $\mathbb{V}_0\setminus\mathbb{V}_1$ with a structure of a universal family of tori. This gives a smooth mapping from $\mathbb{V}_0\setminus\mathbb{V}_1$ to the unoriented\footnote{ that is with no fixed orientation on $G$, cf. \cite[Remark 2.1]{LMStacks}.} Teichmüller stack making it an atlas of it. Calling $\mathscr{T}$ the space $G\times \text{Gr}_n(\C^{2n})$ with its complex structure on $\mathbb{V}_0\setminus\mathbb{V}_1$, then $\text{GL}_{2n}(\mathbb{Z})$ act on $\mathscr{T}$ and the stack  $\bar{\mathscr{M}}_\ell(G)$ is the quotient stack $[\mathscr{T}/\text{GL}_{2n}(\mathbb{Z})]$. Its geometric quotient is $\mathbb{V}/\text{GL}_{2n}(\mathbb{Z})$. Once again, $G_\C$ is equal to $\mathfrak{G}_\C$ and the left invariant foliations of Lemma \ref{lemmaHF} are just the linear foliations of $\mathfrak{G}_\C\simeq\C^{2n}$ by parallel $n$-planes.

\subsection{Left-invariant Hopf Surfaces}
\label{subsecHopf}
We take now $G=\text{SU}_2\times \mathbb{S}^1$. The set of left invariant complex structures on $G$ has been investigated in \cite{Sasaki}. We review its results. In the Lie algebra $\mathfrak{s}\mathfrak{u}_2$, consider the basis
\begin{equation*}
	X=\begin{pmatrix}
		i &0\\
		0 &-i
	\end{pmatrix},
	\qquad
	Y_1=\begin{pmatrix}
		0 &1\\
		-1 &0
	\end{pmatrix},
	\qquad
	Y_2=\begin{pmatrix}
		0 &i\\
		i &0
	\end{pmatrix}
\end{equation*} 
Then a basis of $\mathfrak{G}=\mathfrak{s}\mathfrak{u}_2\times\R$ is given by 
\begin{equation}
	T=(0,1),\qquad S=(X,0),\qquad A=(Y_1,0),\qquad B=(Y_2,0)
\end{equation}
The set $\mathscr{I}_\ell$ maps through \eqref{Imapping} to the subset of $\text{Gr}_2(\mathfrak{G}_\C)$ containing
\begin{enumerate}[(I)]
	\item $\langle T-\tau S,A+iB\rangle$ and its conjugate ($\tau$ is a complex number with non-zero imaginary part)
	\item $\langle 2S+a(A+iB)+a^{-1}(A-iB),bT +ia(A+iB)-ia^{-1}(A-iB)\rangle$ ($a\in\C^*$ and $b$ is a complex number with non-zero real part)
\end{enumerate}
and $\mathbb{V}$ is its closure in $\text{Gr}_2(\mathfrak{G}_\C)$. It is isomorphic to $\mathbb{P}^1\times\mathbb{P}^1$. More precisely, the set of complex structures of type (II) is parametrized by $(a,b)\in\C^*\times\C\setminus i\mathbb{R}$ with natural compactification $\mathbb{P}^1\times\mathbb{P}^1$. On the other hand, both the set of complex structures of type (I) and its conjugate compactify as a $\mathbb{P}^1$ with parameter $\tau$. A straightforward computation shows that the map $(0,b)\mapsto \tau=-2ib^{-1}$, resp. $(+\infty,b)\mapsto \tau=2ib^{-1}$, identifies the compactified type (I) set with $\{0\}\times \mathbb{P}^1$, resp. its conjugate with $\{+\infty\}\times\mathbb{P}^1$.

A complex structure of type (I) with $\tau\in\mathbb{H}\cup -\mathbb{H}$ is an elliptic Hopf surface fibering over $\mathbb{P}^1$ with fiber $\mathbb{E}_\tau$ and is isomorphic to the quotient of $\mathbb{C}^2\setminus\{0\}$ by the group generated by the homothety $e^{2i\pi\tau}Id$. Its conjugate gives the same Hopf surface but with the opposite orientation on the base. The transversely CR foliations correspond to the cases $\tau\in\R\mathbb{P}^1$. Here the elliptic fibration becomes a fibration over $\mathbb{P}^1$ with fibers $\mathbb{S}^1\times\mathbb{S}^1$ endowed with a linear foliation as in \S \ref{subsectori}. The CR structure on the fibration is given by the pull-back of the complex structure of $\mathbb{P}^1$. 

Now, a complex structure of type (II) is isomorphic to a complex structure of type (I), cf. \cite{Sasaki}, so that it is enough to restrict 
$\mathbb{V}$ to the projective line 
\begin{equation*}
	\mathbb{V}_0={\{\langle T-\tau S,A+iB\rangle\mid\tau\in\mathbb{P}^1\}}
\end{equation*}
Set $\lambda:=e^{2i\pi\tau}$ and denote by $H_\lambda$ the elliptic Hopf surface $\mathbb{C}^2\setminus\{0\}/\langle \lambda Id\rangle$. It is easy to check that different values of $\lambda$ give non-isomorphic Hopf surfaces and that the automorphism group of $H_\lambda$ is $\text{GL}_2(\C)/\langle \lambda Id\rangle$. Hence Hypothesis  \ref{condition} is satisfied and Theorem \ref{thmcompactification} applies. From all that, we may infer a presentation of $\bar{\mathscr{M}}_\ell(G)$ as a quotient stack in the same spirit of \S \ref{subsectori}. Consider the family
\begin{equation}
	\label{family}
	\text{GL}_2(\C)\times (\mathbb{H}\cup -\mathbb{H})/_{\displaystyle\langle e^{2i\pi\tau}Id,Id\rangle}\longrightarrow \mathbb{H}\cup -\mathbb{H}
\end{equation}
and observe that all complex manifolds
\begin{equation*}
	K_\tau:=\text{GL}_2(\C)/_{\displaystyle\langle e^{2i\pi\tau}Id\rangle}
\end{equation*}
are complex Lie groups all isomorphic as {\slshape real} Lie groups. Hence, denoting by $K$ the common underlying $C^\infty$ Lie group, we extend the family \eqref{family} over the whole $\mathbb{P}^1$ by adding a copy of $K$ above all points of $\mathbb{P}^1\setminus (\mathbb{H}\cup -\mathbb{H})$. That is, we consider $K\times \mathbb{P}^1\to \mathbb{P}^1$ with its restriction to $\mathbb{H}\cup -\mathbb{H}$ endowed with the complex structure of \eqref{family}. Denote by $\mathscr{A}$ this family. Finally, we just have to identify the fiber over $\tau$ with the fiber over $\tau+1$. So we let $1\in\mathbb{Z}$ act on $K\times \mathbb{P}^1$ as the identity on $K$ and the translation by $1$ on $\mathbb{P}^1$. We note that this action is holomorphic on \eqref{family}. The stack  $\bar{\mathscr{M}}_\ell(G)$ is the quotient stack $[\mathscr{A}/\mathbb{Z}]$ and its geometric quotient is $\mathbb{P}^1/\mathbb{Z}$. The isotropy group of $+\infty$ is countable so once again this is not an orbifold. 

Finally, $G_\C$ is equal here to $\text{SL}_2(\C)\times\C$ and the left invariant complex structures on $G$ can thus be realized as left invariant holomorphic foliations of $\text{SL}_2(\C)\times\C$.


\begin{thebibliography}{99}
	
	
	
	\bibitem{Douady} Douady, A.
	\emph{Le probl\`eme des modules pour les sous-espaces analytiques compacts d'un espace analytique donn\'e}. 
	Ann. Inst. Fourier 16, (1966). 1--95.
	
	\bibitem{Ghys} Ghys, E.
	\emph{Déformations des structures complexes sur les espaces homogènes de $\text{\rm SL}(2,\C)$}.
	J. reine angew. Math. 468 (1995), 113--138.
	

	\bibitem{Kodaira} Kodaira, K.
	Complex manifolds and deformation of complex structures. Grundlehren Math. Wiss., vol. 283, Springer-Verlag, New York, 1986.
	
	\bibitem{KE} Kontsevich, M.
	\emph{XI Solomon Lefschetz Memorial Lecture series: Hodge structures in non-commutative geometry}. In Non-commutative geometry in mathematics and physics, pp. 1–21.
	Notes by Ernesto Lupercio
	Contemp. Math., 462
	American Mathematical Society, Providence, RI, 2008
	
	
	
	
	
	\bibitem{LMStacks} Meersseman, L.
	\emph{The Teichm\"uller and Riemann Moduli Stacks}. 
	J. Ec. Polytechnique Math. 6 (2019), 879--945.
	
	
	
	\bibitem{Polarized} Meersseman, L.
	\emph{Variétés CR polarisées et  $G$-polarisées, partie I}.
		Int. Math. Res. Not. IMRN (2014), no. 21, 5912--5973.
	
	

	\bibitem{Rieffel} Rieffel, M.A.
	\emph{$\C^*$-algebras associated with irrational rotations}. 
	Pacific J. Math. 93 (1981), no. 2, 415--429.
	
	\bibitem{Rollenske1} Rollenske, S.
	\emph{Lie-algebra Dolbeault-cohomology and small deformations of nilmanifold}.
		J. Lond. Math. Soc. 79 (2009), no.2, 34--362.
	
	\bibitem{Rollenske2} Rollenske, S.
	 \emph{The Kuranishi space of complex parallelisable nilmanifolds}.
	J. Eur. Math. Soc. (JEMS) 13 (2011), no. 3, 513--531.
	
	\bibitem{Samelson} Samelson, H.
	\emph{A class of complex-analytic manifolds}.
		Portugal. Math. 12 (1953), 129--132.
		
	\bibitem{Sasaki} Sasaki, T.
	\emph{Classification of left invariant complex structures on $\text{GL}_2(\R)$ and $\text{U}_2$}.
	Kumamoto J. Sci. (Math) 14 (1981), 115--123.
%
%
%
%
	
	
\end{thebibliography}
\end{document}